\documentclass[12pt]{amsart}
\pagestyle{plain}

\usepackage{graphics}
\usepackage{amsfonts,amssymb,color}
\usepackage[mathscr]{eucal}
\usepackage{amsmath, amsthm}
\usepackage{mathrsfs}
\usepackage{amsbsy}
\usepackage{dsfont}
\usepackage{bbm}
\usepackage{wasysym}
\usepackage{stmaryrd}
\usepackage{url}
\usepackage{fourier}
\usepackage[T1]{fontenc}

\input xypic
\xyoption{all}






\makeindex
\makeglossary

\begin{document}
\baselineskip = 16pt

\newcommand \ZZ {{\mathbb Z}}
\newcommand \NN {{\mathbb N}}
\newcommand \RR {{\mathbb R}}
\newcommand \PR {{\mathbb P}}
\newcommand \AF {{\mathbb A}}
\newcommand \GG {{\mathbb G}}
\newcommand \QQ {{\mathbb Q}}
\newcommand \bcA {{\mathscr A}}
\newcommand \bcC {{\mathscr C}}
\newcommand \bcD {{\mathscr D}}
\newcommand \bcF {{\mathscr F}}
\newcommand \bcG {{\mathscr G}}
\newcommand \bcH {{\mathscr H}}
\newcommand \bcM {{\mathscr M}}
\newcommand \bcJ {{\mathscr J}}
\newcommand \bcL {{\mathscr L}}
\newcommand \bcO {{\mathscr O}}
\newcommand \bcP {{\mathscr P}}
\newcommand \bcQ {{\mathscr Q}}
\newcommand \bcR {{\mathscr R}}
\newcommand \bcS {{\mathscr S}}
\newcommand \bcV {{\mathscr V}}
\newcommand \bcW {{\mathscr W}}
\newcommand \bcX {{\mathscr X}}
\newcommand \bcY {{\mathscr Y}}
\newcommand \bcZ {{\mathscr Z}}
\newcommand \goa {{\mathfrak a}}
\newcommand \gob {{\mathfrak b}}
\newcommand \goc {{\mathfrak c}}
\newcommand \gom {{\mathfrak m}}
\newcommand \gon {{\mathfrak n}}
\newcommand \gop {{\mathfrak p}}
\newcommand \goq {{\mathfrak q}}
\newcommand \goQ {{\mathfrak Q}}
\newcommand \goP {{\mathfrak P}}
\newcommand \goM {{\mathfrak M}}
\newcommand \goN {{\mathfrak N}}
\newcommand \uno {{\mathbbm 1}}
\newcommand \Le {{\mathbbm L}}
\newcommand \Spec {{\rm {Spec}}}
\newcommand \Gr {{\rm {Gr}}}
\newcommand \Pic {{\rm {Pic}}}
\newcommand \Jac {{{J}}}
\newcommand \Alb {{\rm {Alb}}}
\newcommand \Corr {{Corr}}
\newcommand \Chow {{\mathscr C}}
\newcommand \Sym {{\rm {Sym}}}
\newcommand \Prym {{\rm {Prym}}}
\newcommand \cha {{\rm {char}}}
\newcommand \eff {{\rm {eff}}}
\newcommand \tr {{\rm {tr}}}
\newcommand \Tr {{\rm {Tr}}}
\newcommand \pr {{\rm {pr}}}
\newcommand \ev {{\it {ev}}}
\newcommand \cl {{\rm {cl}}}
\newcommand \interior {{\rm {Int}}}
\newcommand \sep {{\rm {sep}}}
\newcommand \td {{\rm {tdeg}}}
\newcommand \alg {{\rm {alg}}}
\newcommand \im {{\rm im}}
\newcommand \gr {{\rm {gr}}}
\newcommand \op {{\rm op}}
\newcommand \Hom {{\rm Hom}}
\newcommand \Hilb {{\rm Hilb}}
\newcommand \Sch {{\mathscr S\! }{\it ch}}
\newcommand \cHilb {{\mathscr H\! }{\it ilb}}
\newcommand \cHom {{\mathscr H\! }{\it om}}
\newcommand \colim {{{\rm colim}\, }} 
\newcommand \End {{\rm {End}}}
\newcommand \coker {{\rm {coker}}}
\newcommand \id {{\rm {id}}}
\newcommand \van {{\rm {van}}}
\newcommand \spc {{\rm {sp}}}
\newcommand \Ob {{\rm Ob}}
\newcommand \Aut {{\rm Aut}}
\newcommand \cor {{\rm {cor}}}
\newcommand \Cor {{\it {Corr}}}
\newcommand \res {{\rm {res}}}
\newcommand \red {{\rm{red}}}
\newcommand \Gal {{\rm {Gal}}}
\newcommand \PGL {{\rm {PGL}}}
\newcommand \Bl {{\rm {Bl}}}
\newcommand \Sing {{\rm {Sing}}}
\newcommand \spn {{\rm {span}}}
\newcommand \Nm {{\rm {Nm}}}
\newcommand \inv {{\rm {inv}}}
\newcommand \codim {{\rm {codim}}}
\newcommand \Div{{\rm{Div}}}
\newcommand \sg {{\Sigma }}
\newcommand \DM {{\sf DM}}
\newcommand \Gm {{{\mathbb G}_{\rm m}}}
\newcommand \tame {\rm {tame }}
\newcommand \znak {{\natural }}
\newcommand \lra {\longrightarrow}
\newcommand \hra {\hookrightarrow}
\newcommand \rra {\rightrightarrows}
\newcommand \ord {{\rm {ord}}}
\newcommand \Rat {{\mathscr Rat}}
\newcommand \rd {{\rm {red}}}
\newcommand \bSpec {{\bf {Spec}}}
\newcommand \Proj {{\rm {Proj}}}
\newcommand \pdiv {{\rm {div}}}
\newcommand \CH {{\it {CH}}}
\newcommand \wt {\widetilde }
\newcommand \ac {\acute }
\newcommand \ch {\check }
\newcommand \ol {\overline }
\newcommand \Th {\Theta}
\newcommand \cAb {{\mathscr A\! }{\it b}}

\newenvironment{pf}{\par\noindent{\em Proof}.}{\hfill\framebox(6,6)
\par\medskip}

\newtheorem{theorem}[subsection]{Theorem}
\newtheorem{conjecture}[subsection]{Conjecture}
\newtheorem{proposition}[subsection]{Proposition}
\newtheorem{lemma}[subsection]{Lemma}
\newtheorem{remark}[subsection]{Remark}
\newtheorem{remarks}[subsection]{Remarks}
\newtheorem{definition}[subsection]{Definition}
\newtheorem{corollary}[subsection]{Corollary}
\newtheorem{example}[subsection]{Example}
\newtheorem{examples}[subsection]{examples}

\title{Blow ups and base changes of symmetric powers and Chow groups }
\author{Kalyan Banerjee}

\address{Tata Institute of Fundamental Research Mumbai, India}

\email{kalyan@math.tifr.res.in}

\footnotetext{Mathematics Classification Number: 14C25, 14D05, 14D20,
 14D21}
\footnotetext{Keywords: Pushforward homomorphism, Theta divisor, Jacobian varieties, Chow groups.}

\begin{abstract}
Let $\Sym^m X$ denote the $m$-th symmetric power of  a smooth projective curve $X$. Let $\wt{\Sym^m X}$ be the blow up of $\Sym^m X$ along some non-singular subvariety. In this note we are going to discuss when the pushforward homomorphism induced by the natural morphism from $\wt{\Sym^m X}$ to $\Sym^n X$ is injective at the level of Chow groups for $m\leq n$. Also we are going to prove that base changes of embeddings of one symmetric power into another, with respect to embeddings induces an injection at the level of Chow groups.
\end{abstract}

\maketitle

\section{Introduction}
In the paper by \cite{Collino} the injectivity of the push-forward homomorphism at the level of Chow groups, induced by the closed embedding of one symmetric power of a smooth projective curve into another had been proved. In the paper \cite{BI}, the techniques developed in \cite{Collino} has been generalised to answer similar injectivity question at the level of higher Chow groups, induced by the closed embedding of one symmetric power of a curve into another. Then in \cite{BAN} a generalisation says that the same injectivity holds at the level of Chow groups for symmetric powers of higher dimensional varieties.

In this paper our aim is to address the following questions. Let us consider the closed embedding of one symmetric power of a  fixed projective curve into another higher dimensional symmetric power of the same projective curve. Say we have $\Sym^m X$ embedded inside $\Sym^n X$. Let $Z$ be a smooth subvariety of $\Sym^m X$ intersecting $\Sym^{m-1}X$ transversally. Then we blow up $\Sym^m X$ along $Z$, let $\wt{\Sym^m X}$ denote the blow up. Then we prove that the natural morphism from $\wt{\Sym^m X}$ to $\Sym^n X$ induces an injective push-forward homomorphism at the level of Chow groups of zero cycles with the assumption that $\CH_0(\wt{\Sym^{m-1}X})\to \CH_0(\Sym^n X)$ is injective, where $\wt{\Sym^{m-1}X}$ is the strict transform of $\Sym^{m-1}X$ under this blow-up.

\textit{Let $Z$ be a smooth subvariety intersecting  a copy of $\Sym^{m-1}X $ transversally in $\Sym^m X$. Let $\pi^{-1}(Z)=E$. Let $\wt{\Sym^{m-1}X}$ is the strict transform of $\Sym^{m-1}X$ under this blow-up. Also assume that $\CH_0(\wt{\Sym^{m-1}X})\to \CH_0(\Sym^n X)$ is injective.  Then $\pi_*$ from $\CH_0(\wt{\Sym^m X})$ to $\CH_0(\Sym^n X)$ is injective.}

The technique is to follow the Collino's technique in \cite{Collino} to prove the injectivity of the push-forward induced by the closed embedding $\Sym^m C$ into $\Sym^n C$, where $C$ is a smooth projective. There is a natural correspondence on $\Sym^n C\times \Sym^m C$ induced by the graph of the projection from $C^n\to C^m$. This correspondence reduces everything to chase a commutative diagram where the rows are the localisation exact sequences at the level of Chow groups, which gives the required result. It is worthwhile to mention that the method involved in the proof can also be performed in the set up when we consider the group of algebraic cycles modulo algebraic equivalence on symmetric powers.

In the next section we try to understand the Collino's technique for base change of symmetric powers of curves. We prove that if we have an open or closed immersion of algebraic schemes, then the embedding for base change of symmetric powers induces an injection at the level of Chow groups. In \cite{BI} we did the same for algebraic cycles modulo algebraic equivalence but still we present this theorem here to understand when we have base change with respect to morphisms which are not embeddings. Further we consider the projective varieties $Y$ and a group $G$ acting on $Y$, and consider the base change of symmetric powers with respect to the natural morphism from $Y$ to $Y/G$. We prove that the push-forward from $CH_*\left(\frac{\Sym^m X\times_{Y/G}Y}{G}\right)_{\QQ}$ to $\CH_*\left(\frac{\Sym^n X\times_{Y/G}Y}{G}\right)_{\QQ}$ is injective. The main result is as follows:

\textit{Let $G$ be a finite group acting on $Y$ and we have a fiber square. Suppose that we have a morphism from $\Sym^n X$ to $Y/G$. Suppose that $\Sym^i X\times_{Y/G}Y$ is smooth . Then the embedding $\Sym^m X\times_{Y/G}Y$ into $\Sym^n X\times_{Y/G}Y$ induces injective push-forward homomorphism from $\CH_*\left(\frac{\Sym^m X\times_{Y/G}Y}{G}\right)_{\QQ}$ into $\CH_*\left(\frac{\Sym^n X\times_{Y/G}Y}{G}\right)_{\QQ}$ (or we can resolve the singularities and consider the statement on the resolution of singularities).}

In the third section we understand the case when we have a branched cover of a smooth projective curve by another smooth projective curve. That is we ask what is the kernel of the push-forward homomorphism at the level of Chow groups of symmetric powers, induced by the finite morphism $C'\to C$ of smooth projective curves. Suppose that the covering is $i:1$ and the cyclic group $\ZZ_i$ acts on $C'$ (here $\ZZ_i$ is the finite group $\ZZ/i\ZZ$). Then the natural homomorphism from $\CH_*(\Sym^m C)^{\ZZ_i}$ to $\CH_*(\Sym^n C)$ has only torsion elements in the kernel . The main result is as follows.

\textit{Elements of the kernel of the push-forward from $\CH_*(\Sym^m C')^{\ZZ_i}$ to $\CH_*(\Sym^n C)$ are torsion.}

In the last section we consider a non-singular closed subscheme $E$ inside some $\Sym^m C$ ($C$ is a smooth projective curve) and prove that the closed embedding  $E\to \Sym^m C\to \Sym^n C$ induces an injective push-forward homomorphism at the level of Chow groups. In particular the push-forward $\CH_*(E)\to \CH_*(\Sym^m C)$ is injective.

\textit{Let $E$ be a non-singular closed subscheme in $\Sym^m C$, such that its intersections with $\Sym^l C$ for all $l<m$ inside $\Sym^m C$ are non-singular. Then the closed embedding of $E$ into $\Sym^n C$ for $m\leq n$ induces a push-forward homomorphism at the level of Chow groups which has torsion kernel.}

\medskip

{\small \textbf{Acknowledgements:}The author would like to thank the ISF-UGC project for funding this project and also thanks the hospitality of Indian Statistical Institute, Bangalore Center for hosting this project.

We assume that the ground field is algebraically closed and of characteristic zero.}

\section{Blow up of symmetric powers and Chow groups}
Let $X$ be a projective curve. Consider  a smooth subvariety $Z$ inside $\Sym^m X$ and blow up $\Sym^m X$ along $Z$. Call it $\wt{\Sym^m X}$ (it is smooth). We have natural morphism $\pi$ from $\wt{\Sym^m X}$ to $\Sym^n X$, given by the composition $\wt{\Sym^m X}\to \Sym^m X$ and $\Sym^m X\to \Sym^n X$, where the later morphism is
$$[x_1,\cdots,x_m]\mapsto [x_1,\cdots,x_m,p,\cdots,p]\;,$$
$p$ is a fixed point in $X$. We would like to investigate when $\pi_*$
is injective.
\begin{proposition}
\label{prop1}
Let $Z$ be a smooth subvariety intersecting  a copy of   $\Sym^{m-1}X $ transversally in $\Sym^m X$. Let $\pi^{-1}(Z)=E$. Let $\wt{\Sym^{m-1}X}$ is the strict transform of $\Sym^{m-1}X$ under this blow-up. Also assume that $\CH_0(\wt{\Sym^{m-1}X})\to \CH_0(\Sym^n X)$ is injective.  Then $\pi_*$ from $\CH_0(\wt{\Sym^m X})$ to $\CH_0(\Sym^n X)$ is injective.
\end{proposition}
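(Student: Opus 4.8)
The plan is to factor $\pi$ as
$$\wt{\Sym^m X}\xrightarrow{\ b\ }\Sym^m X\xrightarrow{\ \iota\ }\Sym^n X ,$$
with $b$ the blow-down and $\iota$ the closed embedding appending $n-m$ copies of $p$, and then to run Collino's localisation argument one step at a time. The first thing I would record is that $b_*\colon\CH_0(\wt{\Sym^m X})\to\CH_0(\Sym^m X)$ is an \emph{isomorphism}: since $Z$ is smooth, the exceptional divisor $E$ is a projective bundle over $Z$, so $\CH_0(E)\to\CH_0(Z)$ is an isomorphism, and the blow-up exact sequence $\CH_0(E)\to\CH_0(Z)\oplus\CH_0(\wt{\Sym^m X})\to\CH_0(\Sym^m X)\to0$ then forces $b_*$ to be bijective on zero-cycles (the ``new'' groups $\CH_{-j}(Z)$, $j\ge1$, all vanishing); the same remark applies to every blow-down occurring below. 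Next I would set up the commutative diagram whose rows are the localisation sequences for $\wt{\Sym^{m-1}X}\subset\wt{\Sym^m X}$ and for the copy of $\Sym^{m-1}X$ (appending $n-m+1$ copies of $p$) inside $\Sym^n X$:
$$\begin{array}{ccccc}
\CH_0(\wt{\Sym^{m-1}X}) & \longrightarrow & \CH_0(\wt{\Sym^m X}) & \longrightarrow & \CH_0(V)\longrightarrow 0\\[1mm]
\downarrow{\scriptstyle\pi_*} & & \downarrow{\scriptstyle\pi_*} & & \downarrow{\scriptstyle\pi_*}\\[1mm]
\CH_0(\Sym^{m-1}X) & \longrightarrow & \CH_0(\Sym^n X) & \longrightarrow & \CH_0(W)\longrightarrow 0 ,
\end{array}$$
where $W=\Sym^n X\setminus\Sym^{m-1}X$ and $V=\wt{\Sym^m X}\setminus\wt{\Sym^{m-1}X}$. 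Here the transversality hypothesis is used precisely to ensure $Z\not\subset\Sym^{m-1}X$, so that $\Sym^{m-1}X$ has multiplicity zero along $Z$ and its full preimage $\pi^{-1}(\Sym^{m-1}X)=b^{-1}(\Sym^{m-1}X)$ is \emph{exactly} the strict transform $\wt{\Sym^{m-1}X}$ (no stray component of $E$); consequently $V=\pi^{-1}(W)$ and all three squares commute.

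A routine four-lemma chase then reduces injectivity of the middle vertical map to two statements. The first is that the left vertical composite $\CH_0(\wt{\Sym^{m-1}X})\to\CH_0(\Sym^n X)$ is injective, which is exactly the standing hypothesis (equivalently, after composing with the blow-down isomorphism, that $\CH_0(\Sym^{m-1}X)\to\CH_0(\Sym^n X)$ is injective). The second is that $\CH_0(V)\to\CH_0(W)$ is injective. For this I would observe that $V=\Bl_{Z\cap U_m}(U_m)$ with $U_m=\Sym^m X\setminus\Sym^{m-1}X$, and that $\pi|_V$ is the composite of the blow-down $V\to U_m$ (again an isomorphism on $\CH_0$) with the closed immersion $U_m\hookrightarrow W$ induced by $\iota$; so it suffices that $\CH_0(U_m)\to\CH_0(W)$ be injective. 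That follows from injectivity of $\iota_*\colon\CH_0(\Sym^m X)\to\CH_0(\Sym^n X)$ by a second four-lemma, applied to the localisation square of the pair $\Sym^{m-1}X\subset\Sym^m X$ over $\Sym^{m-1}X\subset\Sym^n X$, whose left vertical map is the identity.

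Injectivity of $\iota_*$ is where the correspondence does the real work, and here I would either cite Collino or run his argument: with $\Gamma=\{(E,D)\in\Sym^n X\times\Sym^m X\colon D\le E\}$ the incidence correspondence, one computes $\Gamma_*\circ\iota_*=\mathrm{id}+(\text{terms factoring through }\CH_0(\Sym^{m-1}X))$, so a zero-cycle annihilated by $\iota_*$ already vanishes on the open stratum $U_m$, hence is supported on $\Sym^{m-1}X$, and one finishes by descending induction on $m$ (the base case $m=0$ being trivial). If one prefers to stay upstairs, one pulls $\Gamma$ back to its proper transform $\wt\Gamma\subset\Sym^n X\times\wt{\Sym^m X}$ — still a legitimate operator on $\CH_0$ since the ambient product is smooth — and checks the analogous identity $\wt\Gamma_*\circ\pi_*=\mathrm{id}$ on the open stratum $V$, the exceptional contributions being invisible on zero-cycles. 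The step I expect to be the genuine obstacle is exactly this exceptional-divisor bookkeeping: making the identification $\pi^{-1}(\Sym^{m-1}X)=\wt{\Sym^{m-1}X}$ watertight from the transversality assumption, checking that each blow-down appearing in the argument is an isomorphism on $\CH_0$, and controlling the correction terms of $\Gamma_*\circ\iota_*$ (resp. $\wt\Gamma_*\circ\pi_*$) carefully enough over the boundary; everything else is the standard Collino diagram chase.
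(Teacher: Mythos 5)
Your argument is correct, but it takes a genuinely different route from the paper's. The paper stays on the blow-up throughout: it pulls the correspondence $\Gamma$ back to $\Sym^n X\times\wt{\Sym^m X}$, computes $(\pi\times \id)^*\Gamma'=(f\times f)^*(\Delta+D)$ as the diagonal of $\wt{\Sym^m X}$ plus terms supported on $E\times E$ and on $\wt{\Sym^m X}\times\wt{\Sym^{m-1}X}$, invokes the Chow moving lemma to move a zero-cycle off $E$, and only then runs the localisation chase --- exactly the ``stay upstairs'' variant you sketch at the end. Your main line instead pushes everything down first: $b_*$ is an isomorphism on $\CH_0$ because the extra summands in the blow-up formula are $\CH_{-j}(Z)=0$, and Collino's theorem is quoted as a black box for $\iota_*$; the two are glued by two localisation chases, all of whose ingredients check out (including the set-theoretic identity $\pi^{-1}(\Sym^{m-1}X)=\wt{\Sym^{m-1}X}$, which does follow from transversality since $N_{Z\cap\Sym^{m-1}X/\Sym^{m-1}X}\cong N_{Z/\Sym^m X}|_{Z\cap\Sym^{m-1}X}$ forces the strict transform to contain every exceptional fibre over $Z\cap\Sym^{m-1}X$). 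What your approach buys is that no correspondence computation and no moving lemma are needed on the blow-up; indeed your opening observation already finishes the proof in one line, since $\pi_*=\iota_*\circ b_*$ with $b_*$ bijective and $\iota_*$ injective by Collino, so the transversality assumption and the hypothesis on $\CH_0(\wt{\Sym^{m-1}X})\to\CH_0(\Sym^n X)$ are not actually needed (the latter is automatic, $\wt{\Sym^{m-1}X}\to\Sym^{m-1}X$ being itself a smooth blow-down and hence an isomorphism on $\CH_0$); the subsequent diagram chases are redundant but harmless. What the paper's upstairs computation buys in exchange is a template that persists in settings where one cannot simply collapse the blow-up --- cycles of positive dimension, or algebraic equivalence --- at the price of the exceptional-divisor bookkeeping you correctly identify as the delicate point.
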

\begin{proof}
Our approach is to follow that approach of Collino as presented in \cite{Collino} to prove that the closed embedding of $\Sym^m X$ into $\Sym^n X$ induces an injective push-forward homomorphism at the level of Chow groups. So first we consider the correspondence
$$\Gamma=\pi_n\times \pi_m(Graph(pr_{n,m}))$$
where $pr_{n,m}$ is the projection morphism from $X^n$ to $X^m$ and $\pi_i$ is the natural morphism from $X^i$ to $\Sym^i X$. Let $f$ be the morphism from $\wt{\Sym^m X}$ to $\Sym^m X$. Let $\Gamma'$ be equal to $(id\times f)^*(\Gamma)$ supported on $\Sym^n X\times\wt{\Sym^m X}$. As a first step we prove the following.

\begin{lemma}
\label{lemma1}
The homomorphism $\Gamma'_*\circ \pi_*$ is induced by $(\pi\times id)^*\Gamma'$.
\end{lemma}
\begin{proof}
By definition
$$\Gamma'_*\circ \pi_*(V)=pr_{\wt{\Sym^m X*}}(\pi_*(V)\times \wt{\Sym^m X}.\Gamma')$$
which can be written as
$$pr_{\wt{\Sym^m X*}}((\pi_*\times id_*)(V\times \wt{\Sym^m X}).\Gamma')\;.$$
By the projection formula that is equal to
$$pr_{\wt{\Sym^m X*}}(\pi_*\times id_*)((V\times \wt{\Sym^m X}).(\pi\times id)^*\Gamma')$$
which is nothing but
$$pr_{\wt{\Sym^m X*}}((V\times \wt{\Sym^m X}).(\pi\times id)^*\Gamma')\;.$$
So the homomorphism $\Gamma'_*\circ \pi_*$ is induced by $(\pi\times \id)^*\Gamma'.$
\end{proof}

Now consider the following commutative diagram.
$$
  \diagram
   \wt{Sym^m X}\times \wt{\Sym^m X}\ar[dd]_-{f\times f} \ar[rr]^-{\pi\times id} & & \Sym^n X\times \wt{\Sym^m X} \ar[dd]^-{id\times f} \\ \\
  \Sym^m X\times \Sym^m X \ar[rr]^-{i\times id} & & \Sym^n X\times \Sym^m X
  \enddiagram
  $$
Here $i$ is the closed embedding of $\Sym^{m}X$ into $\Sym^n X$. So we have
$$(id\times f)\circ (\pi\times id)=(i\times id)\circ (f\times f)$$
so we get that
$$(\pi\times id)^*(id\times f)^*\Gamma=(f\times f)^*(i\times id)^*\Gamma$$
but we know by \cite{Collino} that
$$(i\times id)^*\Gamma=\Delta+D$$
where $\Delta $ is the diagonal of $\Sym^m X\times \Sym^m X$ and $D$ is supported on $\Sym^m X\times \Sym^{m-1}X$. Therefore
$$(f\times f)^*(\Delta+D)=(f\times f)^*\Delta+(f\times f)^* D`\;.$$
Now
$$(f\times f)^{-1}(\Delta)=\{(x,y)|f(x)=f(y)\}=\Delta_{\wt{\Sym^m X}}\cup V$$
where $V$ is supported on $E\times E$.
Since $f$ is birational we will have
$$(f\times f)^*(\Delta)=\Delta_{\wt{\Sym^m X}}+dV$$
let
$$(f\times f)^*(D)=D'$$
where $D'$ is supported on $\pi^{-1}(\wt{\Sym^{m} X})\times \wt{\Sym^{m-1} X}$, where $\wt{\Sym^{m-1} X}$ is the strict transform of $\Sym^{m-1} X$ under the blow-up.
Let $\wt{X_0(m)}$ be
$$\wt{\Sym^m X}\setminus (\wt{\Sym^{m-1}X}$$
let $\rho$ denote the embedding of $\wt{X_0(m)}$ into $\wt{\Sym^m X}$.
Then by Chow moving lemma we can take the support of a zero cycle away from $E$ and we have
$$\rho^*(\Gamma'_*\circ \pi_*(z))=\rho^*(pr_{\wt{\Sym^m X*}}(z\times \wt{\Sym^m X}).(\pi\times id)^*\Gamma')$$
that is equal to, by the previous calculation
$$\rho^*(pr_{\wt{\Sym^m X*}}(V\times \wt{\Sym^m X}).(\Delta_{\wt{\Sym^m X}}+E+D'))$$

which is
$$\rho^*(Z+Z_1)$$
where $Z_1$ is supported on the union $\wt{\Sym^{m-1}}X$. Since $\rho^*(Z_1)=0$
we get that
$$\rho^*\Gamma'_*\pi_*=\rho^*\;.$$
Now consider the following commutative diagram.
$$
  \xymatrix{
     \CH_0(\wt{Sym^{m-1}X}) \ar[r]^-{j_{*}} \ar[dd]_-{}
  &   \CH_0(\wt{\Sym^m X}) \ar[r]^-{\rho_0^{*}} \ar[dd]_-{\pi_{*}}
  & \CH_0(X_0(m))  \ar[dd]_-{}  \
  \\ \\
   \CH_0(\wt{\Sym^{m-1}X}) \ar[r]^-{j'_*}
    & \CH_0(\Sym^{n} X) \ar[r]^-{}
  & \CH_0(U)
  }
$$
 $U$ is the complement of $A$ in $\Sym^n X$.
Suppose that
$$\pi_*(z)=0$$
then by the above we get that
$$\rho^*(\Gamma'_*\pi_*(z))=\rho^*(z)=0\;.$$
So by the exactness of the first row we get that there exists $z'$ such that
$$j_*(z')=z\;.$$
by the commutativity of the previous rectangle it follows that
$$j'_{*}(z')=0\;.$$

Now by the assumption we have blown up $\Sym^{m-1}X$, along its intersection with $Z$, so we get that, therefore by the assumption we have that
$$\CH_0(\wt{\Sym^{m-1}X})\to \CH_0(\Sym^n X)$$
is injective. Then it would follow that  $z'=0$ and hence $z=0$. So $\pi_*$ is injective.

\end{proof}

\begin{corollary}
Let $Z$ be a closed point on $\Sym^m X$ which is not in $\Sym^{m-1}X$. Let $\wt{\Sym^{m}X}$ be the blow-up along $Z$. Then $\pi_*$ from $\CH_0(\wt{\Sym^{m}(X)})$  to $\CH_0(\Sym^n X)$ is injective.
\end{corollary}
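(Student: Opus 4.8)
The plan is to deduce this directly from Proposition \ref{prop1}, checking that its two hypotheses hold automatically once $Z$ is a single closed point disjoint from $\Sym^{m-1}X$. First I would note that a closed point is a smooth $0$-dimensional subvariety of $\Sym^m X$, and since $Z\notin \Sym^{m-1}X$ we have $Z\cap \Sym^{m-1}X=\emptyset$; an empty intersection is vacuously transverse, so the transversality requirement in Proposition \ref{prop1} is met for free. (Here we take $m\geq 1$ and $m\leq n$, as in the setup.)

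Next I would identify the strict transform $\wt{\Sym^{m-1}X}$. Because the blow-up center $Z$ is a single point lying off $\Sym^{m-1}X$, the blow-up morphism $f\colon \wt{\Sym^m X}\to \Sym^m X$ is an isomorphism over the open set $\Sym^m X\setminus\{Z\}$, which contains $\Sym^{m-1}X$. Hence the strict transform of $\Sym^{m-1}X$ is carried isomorphically onto $\Sym^{m-1}X$, i.e. $\wt{\Sym^{m-1}X}\cong \Sym^{m-1}X$, and under this identification the composite $\wt{\Sym^{m-1}X}\to \wt{\Sym^m X}\xrightarrow{\pi}\Sym^n X$ is just the closed embedding $\Sym^{m-1}X\hookrightarrow \Sym^n X$ (note $m-1<m\leq n$). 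In particular $\wt{\Sym^{m-1}X}$ is smooth.

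Then I would invoke Collino's theorem \cite{Collino}: the push-forward $\CH_0(\Sym^{m-1}X)\to \CH_0(\Sym^n X)$ induced by this closed embedding of one symmetric power into another is injective. By the previous paragraph this is precisely the statement that $\CH_0(\wt{\Sym^{m-1}X})\to \CH_0(\Sym^n X)$ is injective, which is the remaining hypothesis of Proposition \ref{prop1}. Applying that proposition gives the injectivity of $\pi_*\colon \CH_0(\wt{\Sym^m X})\to \CH_0(\Sym^n X)$, as claimed.

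The only point that calls for any care — and it is a very mild one — is the bookkeeping that the "transversal intersection with $\Sym^{m-1}X$" clause in Proposition \ref{prop1} is genuinely vacuous when the intersection is empty, and that the proof of Proposition \ref{prop1} nowhere secretly uses $Z\cap\Sym^{m-1}X\neq\emptyset$. Inspecting that proof, the center $Z$ enters only through $E=\pi^{-1}(Z)$ (which here is the exceptional $\PR^{m-1}$ over the point) and through the two requirements that $\wt{\Sym^{m-1}X}$ be smooth and that $\CH_0(\wt{\Sym^{m-1}X})\to \CH_0(\Sym^n X)$ be injective — both of which we have verified directly above. So there is no real obstacle: the corollary is essentially immediate once these identifications are spelled out.
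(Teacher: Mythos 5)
Your proposal is correct and follows exactly the route the paper takes: observe that blowing up at a point off $\Sym^{m-1}X$ leaves $\Sym^{m-1}X$ unchanged, invoke Collino's injectivity for $\CH_0(\Sym^{m-1}X)\to\CH_0(\Sym^n X)$ to supply the hypothesis of Proposition \ref{prop1}, and conclude. Your additional remarks on the vacuous transversality and on where the center enters the proof of the proposition are sound but not substantively different from the paper's (terser) argument.
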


\begin{proof}
The proof follows from observing the fact that when we blow up along the point, $\Sym^{m-1} X$ remains unchanged. So we have the injectivity of $\CH_0(\wt{\Sym^{m-1}X})\to \CH_0(\Sym^n X)$. Then the proof follows from the previous proposition \ref{prop1}.
\end{proof}


\section{Base change of symmetric powers}
In this section we are going to prove that if we consider a Cartesian square
$$
  \diagram
   {Sym^n X}\times_Z Y\ar[dd]_-{} \ar[rr]^-{} & & Y \ar[dd]^-{} \\ \\
  \Sym^n X \ar[rr]^-{} & & Z
  \enddiagram
  $$
where $Y\to Z$ is an embedding then the inclusion $\Sym^m X\times_Z Y$ to $\Sym^n X\times_Z Y$ induces an injective push-forward homomorphism at the level of Chow groups (such a similar result is proved in \cite{BI} considering the group of algebraic cycles modulo algebraic equivalence. We present the proof here modulo rational equivalence to discuss further implications of it and for further computations).

\begin{proposition}
\label{prop2}
Let $j$ denote the embedding of $\Sym^m X\times_Z Y$ to $\Sym^n X\times_Z Y$ for $m\leq n$. Assume that the fiber products $(\Sym^i X\times_Z Y)$ are smooth for all $i$. Then $j_*$ is injective at the level of Chow groups.
\end{proposition}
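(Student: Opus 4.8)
The plan is to imitate the Collino-style correspondence argument used in Proposition \ref{prop1}, but now carried out over the base $Z$. First I would form the correspondence $\Gamma$ on $\Sym^n X\times\Sym^m X$ coming from the graph of the projection $pr_{n,m}\colon X^n\to X^m$, exactly as before, so that $(i\times \id)^*\Gamma=\Delta+D$ with $\Delta$ the diagonal of $\Sym^m X\times\Sym^m X$ and $D$ supported on $\Sym^m X\times\Sym^{m-1}X$, by \cite{Collino}. The key point is that everything in this identity is compatible with the flat (indeed, the embedding $Y\to Z$ base-changes along $\Sym^\bullet X\to Z$) base change to $Y$: pulling back along $(\Sym^n X\times_Z Y)\times(\Sym^m X\times_Z Y)\to \Sym^n X\times\Sym^m X$, or more precisely via the fiber-product structure, produces a correspondence $\Gamma_Y$ on $(\Sym^n X\times_Z Y)\times(\Sym^m X\times_Z Y)$ with $(j\times\id)^*\Gamma_Y=\Delta_{\Sym^m X\times_Z Y}+D_Y$, where $D_Y$ is supported on $(\Sym^m X\times_Z Y)\times(\Sym^{m-1}X\times_Z Y)$. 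The smoothness hypothesis on all the $\Sym^i X\times_Z Y$ is what makes the intersection products and the decomposition of the diagonal legitimate.

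Next I would run the localization argument. Write $W_0$ for the complement of $\Sym^{m-1}X\times_Z Y$ inside $\Sym^m X\times_Z Y$, with open embedding $\rho$, and $U$ for the complement of $\Sym^n X$'s image in $\Sym^n X\times_Z Y$. By the Chow moving lemma I move the support of a zero cycle $z$ away from the locus $\Sym^{m-1}X\times_Z Y$, and using $(j\times\id)^*\Gamma_Y=\Delta+D_Y$ together with the projection-formula identity (Lemma \ref{lemma1} applied in this setting: $\Gamma_{Y*}\circ j_*$ is induced by $(j\times\id)^*\Gamma_Y$) I obtain $\rho^*(\Gamma_{Y*}\circ j_*(z))=\rho^*(z)$, since the $D_Y$-contribution is supported on $\Sym^{m-1}X\times_Z Y$ and dies after restriction to the open complement. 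I then chase the commutative diagram whose rows are the localization exact sequences
$$
\CH_0(\Sym^{m-1}X\times_Z Y)\xrightarrow{k_*}\CH_0(\Sym^m X\times_Z Y)\xrightarrow{\rho^*}\CH_0(W_0)
$$
and
$$
\CH_0(\Sym^{m-1}X\times_Z Y)\xrightarrow{k'_*}\CH_0(\Sym^n X\times_Z Y)\longrightarrow \CH_0(U),
$$
with vertical maps the relevant pushforwards. If $j_*(z)=0$ then $\rho^*(z)=0$, so $z=k_*(z')$ for some $z'$, and commutativity gives $k'_*(z')=0$; by descending induction on $m$ (the base case $m-1$ being either trivial or handled by the inductive statement) $j_*$ on $\CH_0$ of the $(m-1)$-st stage is injective, so $z'=0$ and hence $z=0$. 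To get all of $\CH_*$ rather than just $\CH_0$, I would either run the same correspondence argument degree by degree — the correspondence $\Gamma_Y$ acts on all Chow groups and the diagonal/codiagonal identities are degree-preserving — or reduce to the zero-cycle case by the standard trick of intersecting with generic hyperplane sections.

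The main obstacle I anticipate is \emph{not} the diagram chase but the verification that the Collino decomposition $(i\times\id)^*\Gamma=\Delta+D$ genuinely base-changes: one must check that the fiber product $\Sym^i X\times_Z Y$ behaves well under the symmetrization maps $\pi_i\colon X^i\to\Sym^i X$ and that the excess-intersection terms do not acquire extra components over $Y$. This is exactly where the hypothesis ``all $\Sym^i X\times_Z Y$ smooth'' and the fact that $Y\to Z$ is an embedding (so the base change is again an embedding, and $\Sym^m X\times_Z Y\hookrightarrow\Sym^n X\times_Z Y$ is a closed immersion of smooth varieties) get used. A secondary technical point is the applicability of the Chow moving lemma on the possibly non-projective schemes $\Sym^i X\times_Z Y$ when $Y$ is only locally closed; if $Y\to Z$ is a closed immersion everything is projective and there is no issue, and the open case is reduced to the closed one by a further localization sequence.
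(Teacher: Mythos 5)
Your proposal follows essentially the same route as the paper: pull back Collino's correspondence to the fiber products, use the embedding hypothesis on $Y\to Z$ to see that the pulled-back diagonal contributes a single copy of $\Delta_{\Sym^m X\times_Z Y}$ while the residual term stays supported over $\Sym^{m-1}X\times_Z Y$, and then chase the two localization sequences with induction on $m$. The only slip is cosmetic: $U$ should be the complement of $\Sym^{m-1}X\times_Z Y$ in $\Sym^n X\times_Z Y$, not of the image of $\Sym^n X$.
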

\begin{proof}

Let $i$ be the embedding of $\Sym^m X\to \Sym^n X$, where $m\leq n$. Let $j$ denote the embedding of $\Sym^m X\times_Z Y\to \Sym^n X\times_Z Y$. Let $\Gamma$ be as before,
$$\Gamma=\pi_n\times \pi_n(Graph(pr_{n,m}))$$
where $pr_{n,m}$ is the projection from $X^n$ to $X^m$. $\pi_i$ is the natural morphism from $X^i$ to $\Sym^i X$. Let $\pi$ denote the projection morphism from $(\Sym^n X\times_Z Y)\times (\Sym^m X\times_Z Y)\to \Sym^n X\times \Sym^m X$. Then consider the correspondence
$$\pi^*(\Gamma)=\Gamma'$$
supported on $(\Sym^n X\times_Z Y)\times (\Sym^m X\times_Z Y)$. Arguing as in \ref{lemma1} in the previous section we can prove that $\Gamma'_*j_*$ is induced by $(j\times id)^*\Gamma'$, which is equal to $(j\times id)^*\pi^*\Gamma=(\pi\circ (j\times id))^*\Gamma$. Now we have the following commutative diagram.

$$
  \diagram
   {Sym^m X\times_Z Y}\times {\Sym^m X\times_Z Y}\ar[dd]_-{\pi'} \ar[rr]^-{j\times id} & & {\Sym^n X\times_Z Y}\times {\Sym^m X\times_Z Y} \ar[dd]^-{\pi} \\ \\
  \Sym^m X\times \Sym^m X \ar[rr]^-{i\times id} & & \Sym^n X\times \Sym^m X
  \enddiagram
  $$
So we get that
$$\pi\circ (j\times id)=(i\times id)\times \pi'$$
therefore we have that
$$(\pi\circ (j\times id))^*\Gamma=\pi'^*(i\times id)^*\Gamma\;.$$
Now
$$(i\times id)^*\Gamma=\Delta+Y_1$$
where $\Delta$ is the diagonal in $\Sym^m X\times \Sym^m X$ and $Y_1$ is supported on $\Sym^m X\times \Sym^{m-1}X$. Now we compute $\pi'^*(\Delta)$, that is
$$\{([x_1,\cdots,x_m],y)([x_1',\cdots,x_m'],y')|
[x_1,\cdots,x_m]=[x_1',\cdots,x_m']\}$$
but by the definition of fibered product we have that
$$f([x_1,\cdots,x_m])=g(y)=g(y')$$
assuming $g$ to be an embedding we get that $y=y'$. So
$$\pi'^*\Delta=\Delta_{\Sym^m X\times_Z Y}\;.$$
Now
$$\pi'^*(Y)=\{(([x_1,\cdots,x_m],y),([x_1',\cdots,x_m'],y'))\}$$
where $[x_1',\cdots,x_m']=[y_1,\cdots,y_{m-1},p]$, which means that
$$\pi'^* Y$$ is supported on
$$(\Sym^m X\times_Z Y)\times (\Sym^{m-1}X\times_Z Y)\;.$$
So
$$\pi'^*(\Delta+Y_1)=\Delta_{\Sym^m X\times_Z Y}+Y_2$$
where $Y_2$ is supported on
$$(\Sym^m X\times_Z Y)\times (\Sym^{m-1}X\times Y)\;.$$
Now consider
$$\rho:\Sym^m X\times_Z Y\setminus \Sym^{m-1}X\times_Z Y\to \Sym^m X\times_Z Y;,$$
then
$$\rho^*\Gamma'_*j_*(V)=\rho^*(V\times \Sym^m X\times_Z Y.(\Delta_{\Sym^m X\times_Z Y}+Y_2))=\rho^*(V+V_1)=\rho^*(V)\;.$$
Here $V_1$ is supported on $\Sym^{m-1}X\times_Z Y$.

Now  to prove that $j_*$ is injective, we apply induction on $m$. If $m=0$, then since $Y\to Z$ is an embedding we have $\Sym^m \times_Z Y$ is a point. Since $\Sym^n X\times_Z Y$ is projective we have that the inclusion of the point into $\Sym^n X\times_Z Y$ induces injective $j_*$.

Consider the following commutative diagram,

$$
  \xymatrix{
   \CH^*(\Sym^{m-1}X\times_Z Y) \ar[r]^-{j'_{*}} \ar[dd]_-{}
  &   \CH^*({\Sym^m X\times_Z Y}) \ar[r]^-{\rho^{*}} \ar[dd]_-{j_{*}}
  & \CH^*(X_0(m))  \ar[dd]_-{}  \
  \\ \\
   \CH^*(\Sym^{m-1}X\times_Z Y) \ar[r]^-{j''_*}
    & \CH^*(\Sym^{n} X\times_Z Y) \ar[r]^-{}
  & \CH^*(U)
  }
$$
Here $X_0(m)$ is the complement of $\Sym^{m-1}X\times_Z Y$ in $\Sym^m X\times_Z Y$ and $U$ is the complement of $\Sym^{m-1}X\times_Z Y$ in $\Sym^n X\times_Z Y$.

Now suppose that
$$j_*(z)=0$$
that will imply that
$$\rho^*\Gamma'_*j_*(z)=0$$
that is
$$\rho^*(z)=0\;.$$
So by the exactness of the first row we get that
$$j'_*(z')=z\;.$$
Now we have that
$$j_*\circ j'_*(z')=j''_*(z')$$
but by the induction hypothesis we have $j''_*$ is injective from $\Sym^{m-1}X\times_Z Y$ to $\Sym^n X\times_Z Y$. Therefore by the commutativity we have that
$$j''_*(z')=0$$
hence $z'=0$ consequently $z=0$. So $j_*$ is injective.
\end{proof}

\begin{corollary}
Let $Y$ be a closed  subscheme of $\Sym^n X$. Let $i$ denote the closed embedding of $\Sym^m X$ into $\Sym^n X$. Consider $j:Y\cap \Sym^m X\to Y$ and assume that $Y\cap \Sym^i X$ is smooth for all $i$. Then $j_*$ is injective at the level of Chow groups.
\end{corollary}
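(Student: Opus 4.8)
The plan is to repeat the Collino-type argument of Propositions~\ref{prop1} and~\ref{prop2}, with the closed embedding $h\colon E\to\Sym^m C$ in the role played there by the blow-down (resp.\ the projection), and to induct on $m$. The inductive step is controlled by the \emph{boundary} $E':=E\cap\Sym^{m-1}C\subset\Sym^{m-1}C$: by hypothesis $E'$ is non-singular and its intersections with the lower $\Sym^l C$ are non-singular, so the statement for $E'$ (with $m-1$ in place of $m$) is available by induction, the base case $m\le 1$ being immediate. Since $E$ is non-singular its components are disjoint and may be treated separately, so we assume $E$ irreducible; replacing $m$ by the least integer with $E\subset\Sym^m C$ we also assume $E\not\subset\Sym^{m-1}C$, so that $E'$ is a proper closed subscheme of $E$.

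Write $i\colon\Sym^m C\hookrightarrow\Sym^n C$ for the standard embedding, $g=i\circ h$, and $\Gamma=\pi_n\times\pi_m({\rm Graph}(pr_{n,m}))\subset\Sym^n C\times\Sym^m C$ as before. As $E$ and $\Sym^m C$ are smooth, $\id\times h$ is a regular embedding, so $\Gamma':=(\id\times h)^*\Gamma$ is defined on $\Sym^n C\times E$, and exactly as in Lemma~\ref{lemma1} the projection formula gives that $\Gamma'_*\circ g_*$ is induced by $(g\times\id)^*\Gamma'$. From the commutative square
$$
\diagram
E\times E\ar[dd]_-{h\times h}\ar[rr]^-{g\times\id} & & \Sym^n C\times E\ar[dd]^-{\id\times h}\\ \\
\Sym^m C\times\Sym^m C\ar[rr]^-{i\times\id} & & \Sym^n C\times\Sym^m C
\enddiagram
$$
we get $(g\times\id)^*\Gamma'=(h\times h)^*(i\times\id)^*\Gamma$, and Collino's computation gives $(i\times\id)^*\Gamma=\Delta_{\Sym^m C}+D$ with $D$ supported on $\Sym^m C\times\Sym^{m-1}C$. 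Now $(h\times h)^*D$ is supported on $E\times E'$, while $(h\times h)^{-1}(\Delta_{\Sym^m C})=\Delta_E$ and the self-intersection formula identifies $(h\times h)^*\Delta_{\Sym^m C}$ with $\Delta_E$ capped with the Euler class $e(N_{E/\Sym^m C})$ of the normal bundle (of degree $m-\dim E$; trivial precisely when $E=\Sym^m C$, in which case one is back in Collino's situation). Hence, restricting along the open immersion $\rho\colon E_0:=E\setminus E'\hookrightarrow E$, which annihilates the $D$-term, we find that on $\CH_*(E)$ the composite $\rho^*\circ\Gamma'_*\circ g_*$ is the operation ``cap with $e(N_{E/\Sym^m C})$, then restrict to $E_0$''.

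It remains to chase
$$
\xymatrix{
\CH_*(E')\ar[r]^-{j_*}\ar[dd] & \CH_*(E)\ar[r]^-{\rho^*}\ar[dd]^-{g_*} & \CH_*(E_0)\ar[dd]\\ \\
\CH_*(A)\ar[r] & \CH_*(\Sym^n C)\ar[r] & \CH_*(\Sym^n C\setminus A)
}
$$
whose rows are the localisation exact sequences; here $A:=g(E')$ is closed in $\Sym^n C$ (being closed in the closed set $g(E)$), and through the isomorphism $g|_{E'}\colon E'\to A$ the left vertical map is the pushforward along $E'\hookrightarrow\Sym^{m-1}C\hookrightarrow\Sym^n C$, which has torsion kernel by the induction hypothesis. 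If $g_*(z)=0$ then $\Gamma'_*g_*(z)=0$, so $e(N_{E/\Sym^m C})\cap z$ restricts to $0$ on $E_0$; granting that this forces $\rho^*(z)$ to be torsion, choose $N$ with $N\rho^*(z)=0$ and use exactness of the top row to write $Nz=j_*(z')$ with $z'\in\CH_*(E')$. Commutativity of the left square together with $g_*(Nz)=0$ make the image of $z'$ in $\CH_*(\Sym^n C)$ vanish, so $z'$ is torsion by induction; hence $Nz=j_*(z')$ is torsion, and therefore so is $z$.

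The step I expect to be the main obstacle is exactly the implication used above: from ``$e(N_{E/\Sym^m C})\cap z$ vanishes on $E_0$'' to ``$\rho^*(z)$ is torsion''. When $E=\Sym^m C$ the Euler class is $1$ and this is precisely Collino's identity $\rho^*\Gamma'_*g_*=\rho^*$; in general capping with a positive-degree class loses information, and one must exploit the hypothesis that $E$ meets every $\Sym^l C$ ($l<m$) non-singularly — so as to control $N_{E/\Sym^m C}$ along the successive boundary strata — in order to recover that information up to torsion. That is the heart of the matter; the rest is the formal machinery already used in Propositions~\ref{prop1} and~\ref{prop2}.
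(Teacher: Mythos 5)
Your proposal does not prove the statement in question: it proves (or rather, attempts to prove) a different and weaker assertion. The corollary is about a closed subscheme $Y\subset \Sym^n X$ with $Y\cap\Sym^i X$ smooth for all $i$, and claims that the pushforward $\CH_*(Y\cap \Sym^m X)\to \CH_*(Y)$ is \emph{injective}; the paper obtains this in one line from Proposition \ref{prop2} by taking $Z=\Sym^n X$, since then $\Sym^m X\times_Z Y=Y\cap\Sym^m X$ and $\Sym^n X\times_Z Y=Y$, and Proposition \ref{prop2} gives injectivity outright. You instead study the composite $\CH_*(E)\to\CH_*(\Sym^n C)$ for a smooth $E\subset\Sym^m C$ (this is essentially the theorem of the last section of the paper) and aim only for a \emph{torsion} kernel. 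Injectivity of that composite would indeed imply the corollary, since it factors through $\CH_*(Y)$, but a torsion kernel of the composite only yields a torsion kernel for $j_*$, which is strictly weaker than what the corollary asserts; moreover $Y$ itself never enters your argument, so the actual target of $j_*$ is never addressed.

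Independently of this mismatch, the step you flag as ``the heart of the matter'' is a genuine, and in fact fatal, gap in the route you chose. Defining $\Gamma'$ by the refined Gysin pullback $(\id\times h)^*\Gamma$ forces, by excess intersection, $(h\times h)^*\Delta_{\Sym^m C}=e(N_{E/\Sym^m C})\cap \Delta_E$ whenever $E\subsetneq\Sym^m C$, so $\rho^*\Gamma'_*g_*$ becomes ``cap with a class of codimension $c=\codim(E,\Sym^m C)>0$, then restrict''. On $\CH_k$ with $k<c$ --- in particular on zero-cycles --- this operator is identically zero, so $g_*(z)=0$ gives no information whatsoever about $\rho^*(z)$, and no hypothesis on the boundary strata can recover it; there is no implication from ``$e(N_{E/\Sym^m C})\cap z$ vanishes on $E_0$'' to ``$\rho^*(z)$ is torsion''. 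The way the paper avoids this (both in Proposition \ref{prop2} and in the final section) is to construct the correspondence by base change --- pulling $\Gamma$ back along the projection of the product of fiber products, or taking the fiber product of $C^n$ with $\pi_m^{-1}(E)$ over $C^m$ --- so that the resulting cycle lives on $\Sym^n C\times E$ with the diagonal $\Delta_E$ occurring as an honest component with some multiplicity $d$, and the composite acts as multiplication by $d$ (or the identity) on the open stratum, with no Euler class appearing. To prove the corollary as stated you should simply specialise Proposition \ref{prop2} to $Z=\Sym^n X$, which both keeps the correct target $\CH_*(Y)$ and yields injectivity rather than a torsion statement.
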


\begin{proof}
Follows from the previous proposition with $Z=\Sym^n X$.
\end{proof}

Now suppose we consider a projective algebraic variety $Y$ with an involution $i$. Suppose we have the following cartesian square.

$$
  \diagram
   {Sym^n X}\times_{Y/i} Y\ar[dd]_-{} \ar[rr]^-{} & & Y \ar[dd]^-{} \\ \\
  \Sym^n X \ar[rr]^-{} & & Y/i
  \enddiagram
  $$
\begin{proposition}
Let $j$ be the inclusion of $\Sym^m X\times_{Y/i}Y$ into $\Sym^n X_{Y/i}Z$. Assume that these fiber products are smooth. Then $j_*$ is an injection from $\CH_*\left(\frac{\Sym^m X\times_{Y/i}Y}{i}\right)_{\QQ}$ to $\CH_*\left(\frac{\Sym^n X\times_{Y/i}Y}{i}\right)_{\QQ}.$
\end{proposition}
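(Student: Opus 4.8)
The plan is to deduce this from Proposition \ref{prop2} together with the correspondence method of \cite{Collino} by a transfer (averaging) argument over $\QQ$, the single new feature being that the base change is now along the quotient map $Y\to Y/i$ rather than along an embedding, so that the pullback of the diagonal in the Collino correspondence acquires the graph of the involution as an extra component.

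Write $W_k=\Sym^k X\times_{Y/i}Y$. The involution $i$ acts on each $W_k$ through the second factor, fixing the $\Sym^k X$-coordinate; the embedding $j\colon W_m\hookrightarrow W_n$ is $i$-equivariant since it is induced by the $(Y/i)$-morphism $\Sym^m X\hookrightarrow\Sym^n X$, so it descends to $\bar j\colon W_m/i\to W_n/i$ with $q_n\circ j=\bar j\circ q_m$, where $q_k\colon W_k\to W_k/i$ are the quotient maps. All of these maps are proper, so proper pushforward is functorial and $q_{n*}\circ j_*=\bar j_*\circ q_{m*}$. Over $\QQ$, the transfer for quotients by a finite group in characteristic zero shows that $q_{k*}$ restricts to an isomorphism $\CH_*(W_k)_\QQ^{\,i}\xrightarrow{\ \sim\ }\CH_*(W_k/i)_\QQ$ (one uses the standard identities $q_{k*}q_k^*=\deg(q_k)\cdot\id$ and $q_k^*q_{k*}=\id+i_*$). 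Hence proving that $\bar j_*$ is injective is the same as proving that $j_*$ is injective on the $i$-invariant part of $\CH_*(-)_\QQ$. If the $W_k$ fail to be smooth one replaces them by resolutions of singularities, as the statement allows.

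To prove this last assertion I would rerun the argument of Proposition \ref{prop2}. With $\Gamma$ the graph correspondence $\pi_n\times\pi_m(Graph(pr_{n,m}))$ on $\Sym^n X\times\Sym^m X$ and $\Gamma'=\pi^*\Gamma$ on $W_n\times W_m$, where $\pi$ forgets the $Y$-coordinates, the same computation as in Lemma \ref{lemma1} shows that $\Gamma'_*j_*$ is induced by $(\pi\circ(j\times\id))^*\Gamma=\pi'^*(i\times\id)^*\Gamma=\pi'^*(\Delta+Y_1)$, with $Y_1$ supported over $\Sym^{m-1}X$ and $\pi'\colon W_m\times W_m\to\Sym^m X\times\Sym^m X$. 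The one place differing from \ref{prop2} is the pullback of the diagonal: because $Y\to Y/i$ identifies $y$ with $i(y)$, one finds $\pi'^*\Delta=\Delta_{W_m}+\Gamma_i$, where $\Gamma_i\subset W_m\times W_m$ is the graph of the involution (possibly with a positive ramification multiplicity along $\Gamma_i$, which is harmless over $\QQ$). Hence, for an $i$-invariant class $V$ on $W_m$,
$$\Gamma'_*j_*(V)=V+i_*V+Y_2'=2V+Y_2'\;,$$
with $Y_2'$ supported on $\Sym^{m-1}X\times_{Y/i}Y$; restricting to $W_m^\circ=W_m\setminus(\Sym^{m-1}X\times_{Y/i}Y)$ through $\rho$ gives $\rho^*\Gamma'_*j_*(V)=2\,\rho^*V$, so $j_*(V)=0$ forces $\rho^*V=0$.

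It then remains to run the localisation-sequence induction on $m$ as in \ref{prop2}, but on $i$-invariant parts. Since taking $i$-invariants is exact with $\QQ$-coefficients, the $i$-equivariant localisation sequence produces an $i$-invariant $V'$ on $\Sym^{m-1}X\times_{Y/i}Y$ with $j'_*(V')=V$; then $j''_*(V')=j_*j'_*(V')=j_*(V)=0$, and the inductive hypothesis for $m-1$ (with base case $m=0$, where $W_0/i$ is a point inside the projective variety $W_n/i$) forces $V'=0$, hence $V=0$. This gives injectivity of $j_*$ on $i$-invariants, and therefore of $\bar j_*$. The argument is verbatim the same for an arbitrary finite group $G$ in place of $\langle i\rangle$, with $2$ replaced by $|G|$ everywhere, which is exactly why one must work with $\QQ$-coefficients. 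The main obstacle — and the only genuine departure from Propositions \ref{prop1} and \ref{prop2} — is this splitting $\pi'^*\Delta=\Delta_{W_m}+\Gamma_i$: base changing along a finite quotient rather than an embedding forces the extra group-orbit term in the correspondence, which is what compels the passage to invariant cycles over $\QQ$ and makes the whole localisation induction equivariant.
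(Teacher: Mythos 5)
Your proposal is correct and follows essentially the same route as the paper: the extra component $\pi'^*\Delta=\Delta_{W_m}+\Gamma_i$ coming from the graph of the involution, the resulting factor $2$ on $i$-invariant classes, and the localisation-sequence induction on $m$ with $\QQ$-coefficients. The only difference is one of care, not of method: you spell out the transfer identification $\CH_*(W_k/i)_\QQ\cong\CH_*(W_k)_\QQ^{\,i}$ and the $i$-equivariance/exactness needed to keep the induction on invariant classes, which the paper leaves implicit.
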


\begin{proof}
The proof goes along the same line as in the previous proposition \ref{prop2}. Only difference will come in the place of
$$\pi'^*(\Delta)$$
that will be the union
$$\Delta_{\Sym^m X\times_{Y/i}Y}\cup i(\Delta_{\Sym^m X\times_{Y/i}Y})$$
where
$$i(\Delta_{\Sym^m X\times_{Y/i}Y})$$
is
$$\{(([x_1,\cdots,x_m],y),([x_1,\cdots,x_m],i(y)))\}\;.$$
Now we compute $i(\Delta_{\Sym^m X\times_{Y/i}Y}).V\times (\Sym^m X\times_{Y/i}Y)\;.$
Set theoretically that is
$$(([x_1,\cdots,x_m],y),([x_1',\cdots,x_m'],y'))$$
such that
$$[x_1,\cdots,x_m]=[x_1',\cdots,x_m']$$
and
$$y=i(y')$$
therefore we denote
$$pr_{\Sym^m X\times_{Y/i}Y}(i(\Delta_{\Sym^m X\times_{Y/i}Y}).V\times \Sym^m X\times_{Y/i}Y)=i(V)\;.$$
Suppose that $V$ is invariant under $i$, that is $i(V)=V$. Then we get that
$$\rho^*\Gamma'_*j_*(V)=\rho^*(V+i(V))=2\rho^*(V)$$
where $\rho$ is the embedding of complement of $\Sym^{m-1}X\times_{Y/i}Y$ into $\Sym^m X\times_{Y/i}Y$. Now consider  the following diagram.
$$
  \xymatrix{
   \CH^*(\Sym^{m-1}X\times_{Y/i} Y)_{\QQ} \ar[r]^-{j'_{*}} \ar[dd]^-{}
  &   \CH^*({\Sym^m X\times_{Y/i} Y})_{\QQ} \ar[r]^-{\rho^{*}} \ar[dd]_-{j_{*}}
  & \CH^*(X_0(m))_{\QQ}  \ar[dd]_-{}  \
  \\ \\
    \CH^*(\Sym^{m-1}X\times_{Y/i} Y)_{\QQ} \ar[r]^-{j''_*}
    & \CH^*(\Sym^{n} X\times_{Y/i} Y)_{\QQ} \ar[r]^-{}
  & \CH^*(U)_{\QQ}
  }
$$
Here $X_0(m),U$ are the complements of $\Sym^{m-1}X\times_{Y/i}Y$ in $\Sym^m X\times_{Y/i}Y,\Sym^n X\times_{Y/i}Y$ respectively.
Then $j_*(z)=0$ implies that
$$\rho^*(2z)=0$$
if $z$ belongs to $\CH_*\left(\frac{\Sym^{m}X\times_{Y/i}Y}{i}\right)_{\QQ}$. Then arguing as in \ref{prop2} we get that $2z=0$, since we are working with $\QQ$ coefficients we get that $z=0$. Hence we have that $j_*$ from $\CH_*\left(\frac{\Sym^m X\times_{Y/i}Y}{i}\right)_{\QQ}$ to $\CH_*\left(\frac{\Sym^n X\times_{Y/i}Y}{i}\right)_{\QQ}$ is injective.

\end{proof}
\begin{corollary}
Let $C$ be a smooth projective curve. Let $J(C)$ denote the Jacobian of $C$. Consider the involution $i:a\mapsto -a$ on $J(C)$. Let $K=J(C)/i$ denote the Kummer variety associated to $J(C)$. Consider the following fiber square.
$$
  \diagram
   {Sym^n X}\times_{K} J(C)\ar[dd]_-{} \ar[rr]^-{} & & J(C) \ar[dd]^-{} \\ \\
  \Sym^n X \ar[rr]^-{} & & J(C)/i=K
  \enddiagram
  $$
Then the natural inclusion morphism $j$ from $\Sym^m X\times_K J(C)$ to $\Sym^n X\times_K J(C)$ induces injective pushforward homomorphism from $\CH_*\left(\frac{\Sym^m X\times_K J(C)}{i}\right)_{\QQ} $ to $\CH_*\left(\frac{\Sym^n X\times_K J(C)}{i}\right)_{\QQ}$.
\end{corollary}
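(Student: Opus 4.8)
The plan is to obtain this as an immediate application of the preceding proposition, taken with $Y=J(C)$ and $i$ the $(-1)$-involution $a\mapsto -a$, so that $Y/i=K$ is exactly the Kummer variety. The morphism $\Sym^n X\to K$ of the fiber square is the natural one --- for $X=C$ it is the Abel--Jacobi map $\Sym^n X\to J(C)$ followed by the quotient $J(C)\to K$, and in general any fixed choice of morphism to $K$ will do. Under these identifications the schemes $\Sym^m X\times_K J(C)$, $\Sym^n X\times_K J(C)$, their $i$-quotients, and the inclusion $j$ between them are precisely the data appearing in that proposition, so its conclusion is verbatim the injectivity of $j_*$ on $\CH_*(-/i)_{\QQ}$ that we want. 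Thus the only thing to supply is a verification that the hypothesis of the proposition holds here.

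\emph{The one genuine point to check} is smoothness of the fiber products $\Sym^i X\times_K J(C)$ for $i\leq n$. The double cover $J(C)\to K$ is finite and étale over the complement of the image of the $2$-torsion subgroup $J(C)[2]$, which is the singular locus of $K$ when $g(C)\geq 2$ (for $g(C)=1$ one has $K\cong\PR^1$ and there is nothing to check). Hence $\Sym^i X\times_K J(C)$ is automatically smooth over the open set where the morphism $\Sym^i X\to K$ avoids that image, and along the remaining locus smoothness of the pulled-back double cover holds provided $\Sym^i X\to K$ meets the singular locus of $K$ transversally. I would therefore either build this transversality into the hypotheses (it is the smoothness assumption of the proposition), or --- since the ground field has characteristic zero --- invoke resolution of singularities to replace each $\Sym^i X\times_K J(C)$ by a smooth birational model, exactly as the parenthetical clause of the main result allows. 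This smoothness question is the main obstacle; once it is settled the rest of the proof is purely formal and contains no new content.

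Granting smoothness, one simply reruns the proof of the preceding proposition. I would form the correspondence $\Gamma'$ by pulling $\Gamma=\pi_n\times\pi_m(\mathrm{Graph}(pr_{n,m}))$ back along the projection $(\Sym^n X\times_K J(C))\times(\Sym^m X\times_K J(C))\to\Sym^n X\times\Sym^m X$, check as in Lemma \ref{lemma1} that $\Gamma'_*\circ j_*$ is induced by $(j\times\id)^*\Gamma'$, and compute $\pi'^*((i\times\id)^*\Gamma)=\pi'^*(\Delta+Y_1)$. The point, already isolated in the proof of the previous proposition, is that $\pi'^*(\Delta)$ is now the union $\Delta_{\Sym^m X\times_K J(C)}\cup i(\Delta_{\Sym^m X\times_K J(C)})$, so that on an $i$-invariant cycle $V$ the composite $\Gamma'_*\circ j_*$ restricts, on the complement of $\Sym^{m-1}X\times_K J(C)$, to multiplication by $2$. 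Since with $\QQ$-coefficients one has $\CH_*(-/i)_{\QQ}=\CH_*(-)^i_{\QQ}$, every class is represented by an $i$-invariant cycle; hence $j_*(z)=0$ forces $\rho^*(2z)=0$, and chasing the localization sequence as in the proof of Proposition \ref{prop2} together with the inductive hypothesis in degree $m-1$ gives $2z=0$, whence $z=0$ in $\CH_*(-/i)_{\QQ}$. This completes the argument.
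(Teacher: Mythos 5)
Your proposal is correct and matches the paper exactly: the paper's entire proof is the one-line observation that the corollary follows from the preceding proposition by taking $Y=J(C)$. Your additional discussion of the smoothness hypothesis for $\Sym^i X\times_K J(C)$ is more careful than the paper, which simply leaves smoothness of the fiber products as an assumption inherited from the proposition.
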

\begin{proof}
We get it by replacing $Y$ in the previous proposition by $J(C)$.
\end{proof}
\begin{theorem}
Let $G$ be a finite group acting on $Y$ and we have a fiber square
$$
  \diagram
   {Sym^n X}\times_{Y/G} Y\ar[dd]_-{} \ar[rr]^-{} & & Y \ar[dd]^-{} \\ \\
  \Sym^n X \ar[rr]^-{} & & Y/G
  \enddiagram
  $$
Suppose that the fiber product $\Sym^i X\times_{Y/G}Y$ is smooth for all $i$. Then the embedding $\Sym^m X\times_{Y/G}Y$ into $\Sym^n X\times_{Y/G}Y$ induces injective push-forward homomorphism from $\CH_*\left(\frac{\Sym^m X\times_{Y/G}Y}{G}\right)_{\QQ}$ into $\CH_*\left(\frac{\Sym^n X\times_{Y/G}Y}{G}\right)_{\QQ}$.
\end{theorem}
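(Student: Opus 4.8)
The plan is to run the argument of Proposition \ref{prop2} and of the preceding propositions on involutions verbatim; the only genuinely new points are the shape of the pullback of the diagonal along the projection of the fibre product over $Y/G$ and a transfer argument with $\QQ$-coefficients needed to descend the computation from the cover to the quotient. First I would form the correspondence $\Gamma=\pi_n\times \pi_m(Graph(pr_{n,m}))$ on $\Sym^n X\times\Sym^m X$ as in Proposition \ref{prop2}, let $\pi$ be the projection $(\Sym^n X\times_{Y/G}Y)\times(\Sym^m X\times_{Y/G}Y)\to\Sym^n X\times\Sym^m X$, and put $\Gamma'=\pi^*\Gamma$. Arguing as in Lemma \ref{lemma1}, $\Gamma'_*j_*$ is induced by $(\pi\circ(j\times id))^*\Gamma$; via the commutative square relating $\pi\circ(j\times id)$ to $(i\times id)\circ\pi'$, where $i\colon\Sym^m X\hra\Sym^n X$ and $\pi'$ is the projection $(\Sym^m X\times_{Y/G}Y)^2\to(\Sym^m X)^2$, this equals $\pi'^*(i\times id)^*\Gamma=\pi'^*(\Delta+Y_1)$, with $\Delta$ the diagonal of $\Sym^m X\times\Sym^m X$ and $Y_1$ supported on $\Sym^m X\times\Sym^{m-1}X$, by \cite{Collino}.

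The key computation is that of $\pi'^*(\Delta)$. Set-theoretically $\pi'^{-1}(\Delta)$ consists of the pairs $(([x_1,\cdots,x_m],y),([x_1',\cdots,x_m'],y'))$ with $[x_1,\cdots,x_m]=[x_1',\cdots,x_m']$ and $y,y'$ lying over a common point of $Y/G$; since $Y\to Y/G$ is the quotient by $G$, the latter forces $y'=g\cdot y$ for some $g\in G$. As $Y\to Y/G$ is generically \'etale (we are in characteristic zero), it follows that $\pi'^*(\Delta)=\sum_{g\in G}g\cdot\Delta_{\Sym^m X\times_{Y/G}Y}$, the sum of the reduced translates of the diagonal by the $G$-action on the second factor. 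Hence, for a $G$-invariant cycle $V$ on $\Sym^m X\times_{Y/G}Y$ put in general position by Chow's moving lemma on that smooth scheme, and writing $\rho$ for the open immersion of $\Sym^m X\times_{Y/G}Y\setminus\Sym^{m-1}X\times_{Y/G}Y$, one obtains $\rho^*\Gamma'_*j_*(V)=\rho^*\left(\sum_{g\in G}g(V)+V_1\right)=\rho^*\left(\sum_{g\in G}g(V)\right)=|G|\,\rho^*(V)$, where $V_1$ is supported on $\Sym^{m-1}X\times_{Y/G}Y$ and $g(V)=V$ by invariance.

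Now pass to the quotients. With $\QQ$-coefficients the transfer identifies $\CH_*\left(\frac{\Sym^k X\times_{Y/G}Y}{G}\right)_\QQ$ with $\CH_*(\Sym^k X\times_{Y/G}Y)^G_\QQ$ for each $k$, and similarly on the open complements; so a class $z$ on the quotient with $j_*(z)=0$ is represented by a $G$-invariant cycle, and the identity above gives $|G|\,\rho^*(z)=\rho^*\Gamma'_*j_*(z)=0$, hence $\rho^*(z)=0$. The localization diagram of Proposition \ref{prop2}, written now in the quotient Chow groups with $\QQ$-coefficients, then gives $z=j'_*(z')$ for some class $z'$ on the quotient of $\Sym^{m-1}X\times_{Y/G}Y$, and commutativity forces $j''_*(z')=0$; an induction on $m$ — with base case $m=0$, where $\Sym^0 X\times_{Y/G}Y$ is a single $G$-orbit whose quotient is a point mapping into the projective variety $\frac{\Sym^n X\times_{Y/G}Y}{G}$, so the pushforward is injective — yields $z'=0$ and hence $z=0$. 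If the quotients fail to be smooth one replaces each of them by a resolution of singularities throughout, as the statement allows.

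I expect the main obstacle to be the descent step: verifying that $\Gamma'$ is equivariant for the diagonal $G$-action on $(\Sym^n X\times_{Y/G}Y)\times(\Sym^m X\times_{Y/G}Y)$, so that $\Gamma'_*$ and the whole localization diagram descend to the quotients, and that Chow's moving lemma can be applied compatibly with the $G$-action (equivalently, on a $G$-equivariant resolution). Granting this, the identity $\pi'^*(\Delta)=\sum_{g\in G}g\cdot\Delta$ and the remaining diagram chase are routine, exactly parallel to Proposition \ref{prop2} and the involution case.
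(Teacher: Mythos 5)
Your proposal is correct and follows exactly the route the paper intends: the theorem is stated there without its own proof, as the evident generalization of the preceding involution proposition, and your argument supplies precisely that generalization — replacing the two translates of the diagonal by the $G$-translates, the factor $2$ by $|G|$, and invoking the transfer isomorphism $\CH_*(W/G)_{\QQ}\cong \CH_*(W)^G_{\QQ}$ together with the same localization diagram and induction on $m$. Your added care about multiplicities on $\pi'^*(\Delta)$ and about exactness of the $G$-invariant localization sequence with $\QQ$-coefficients only strengthens the sketch the paper leaves implicit.
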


\section{Collino's theorem for branched covers of a smooth curve}
Let $C'\to C$ be a finite $i:1$ morphism of smooth projective curves  such that $\ZZ_i$ acts on $C'$. Then we have the induced morphism $\Sym^n C'\to \Sym^n C$. Also we have the embedding $\Sym^m C\to \Sym^n C$. Consider the composite $\Sym^m C'\to \Sym^m C\to \Sym^n C$. In this section we are going to understand the kernel of the  push-forward homomorphism at the level of Chow groups induced by this morphism.

For that consider the correspondence on $C^n\times C'^m$ given by the following Cartesian square.

$$
  \diagram
   C^n\times_{C^m}C'^m\ar[dd]_-{} \ar[rr]^-{} & & C'^m \ar[dd]^-{} \\ \\
  C^n \ar[rr]^-{} & & C^m
  \enddiagram
  $$
Here $C^n\to C^m$ is the projection morphism. That is the correspondence $\Gamma'$ is the pullback of the Graph of the projection. In terms of explicit formula we have
$$\Gamma'=\{((x_1,\cdots,x_n),(x_1',\cdots,x_m'))|
(x_1,\cdots,x_m)=(f(x_1'),\cdots,f(x_m'))\}$$
where $f$ is the given finite morphism from $C'$ to $C$. Then consider $\Gamma$ to be $(\pi_n\times \pi_m')(\Gamma')$ where $\pi_n,\pi_m'$ are the quotient maps to the respective symmetric powers. Let us denote the morphism from $\Sym^m C'$ to $\Sym^n C$ by $j$. Then arguing as in \ref{lemma1} we get that $\Gamma_*\circ j_*$ is induced by the correspondence $(j\times id)^*\Gamma$. Now we compute $(j\times id)^{-1}\Gamma$.

So $(j\times id)^{-1}(\Gamma)$ is nothing but
$$\{([x_1',\cdots,x_m'][y_1',\cdots,y_m'])|(j\times id)([x_1',\cdots,x_m'][y_1',\cdots,y_m'])\in \Gamma\}\;.$$
That would mean
$$(j([x_1',\cdots,x_m']),[y_1',\cdots,y_m'])\in \pi_n\times \pi_m'(\Gamma')\;.$$
That implies
$$([f(x_1'),\cdots,f(x_m'),p,\cdots,p],[y_1',\cdots,y_m'])\in \pi_n\times \pi_m'(\Gamma')$$
which means that either
$$f(x_i')=f(y_i')$$
for all $i$ all for some $i$ we have $f(y_i')=p$. Supposing that $f$ is $i:1$ we get that
$$(j\times id)^*(\Gamma)=\sum_i d_i\Delta_i+D$$
where $\Delta_i$ is the image of the diagonal on $\Sym^m C'\times \Sym^m C'$ under the action of the group $\ZZ_i$ and $D$ is supported on $\Sym^m C'\times \Sym^{m-1}C'$. Then
$$\Gamma_*j_*(Z)=pr_{\Sym^m C'*}(Z.(j\times id)^*(\Gamma))$$
is equal to $\sum_i d_i Z_i+Y$
where $Z_i$ is nothing but
$$pr_{\Sym^m C'*}(Z.\Delta_i)\;,$$
and $Y$ is supported on $\Sym^{m-1}C'$.
Then let $\rho $ be the inclusion of the complement of $\Sym^{m-1}C'$. Then we get that
$$\rho^*\Gamma_*j_*(Z)=\sum_i d_i\rho^*(Z_i)\;.$$
Now consider the following commutative diagram.
$$
  \xymatrix{
   \CH_*(\Sym^{m-1}C') \ar[r]^-{j'_{*}} \ar[dd]^-{}
  &   \CH_*(\Sym^{m}C') \ar[r]^-{\rho^{*}} \ar[dd]_-{j_{*}}
  & \CH_*(X_0(m))  \ar[dd]_-{}  \
  \\ \\
    \CH_*(\Sym^{m-1}C') \ar[r]^-{j''_*}
    & \CH_*(\Sym^{n} C) \ar[r]^-{}
  & \CH^*(U)
  }
$$
Here $X_0(m),U$ are complements of $\Sym^{m-1}C'$ or its image in $\Sym^m C'$ and $\Sym^n C$ respectively. Suppose that $j_*(z)=0$, then by the previous computation it follows that
$$\rho^*\Gamma_*j_*(z)=0$$
that implies  $\rho^*(\sum_i d_i z_i)=0$. Consider $z$ to be in $\CH_*(\Sym^{m-1} C')^{\ZZ_i}$. That gives us that
$$\rho^*(dz)=0$$
so that would mean that $dz=j'_*(z')$. Now assume that by induction we have the kernel of $\CH_*(\Sym^{m-1} C')^{\ZZ_i}\to \CH_*(\Sym^n C)$ is torsion. Then that would mean that $d'z'=0$. That will give us
$dd'z=0$. So we will get that the following.

\begin{theorem}
Elements of the kernel of the push-forward from $\CH_*(\Sym^m C')^{\ZZ_i}$ to $\CH_*(\Sym^n C)$ are torsion.
\end{theorem}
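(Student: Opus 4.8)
The plan is to repeat the Collino correspondence argument of Proposition~\ref{prop1} and Proposition~\ref{prop2}, now for the composite $j\colon\Sym^m C'\to\Sym^m C\to\Sym^n C$, using the correspondence $\Gamma=(\pi_n\times\pi_m')(\Gamma')$ on $\Sym^n C\times\Sym^m C'$ already built above, where $\Gamma'$ is the pullback of the graph of the projection $C^n\to C^m$ along $C'^m\to C^m$. First I would reprove, verbatim as in Lemma~\ref{lemma1} via the projection formula, that $\Gamma_*\circ j_*$ is the homomorphism induced by the correspondence $(j\times\id)^*\Gamma$ on $\Sym^m C'\times\Sym^m C'$.

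The essential geometric input is then the identity $(j\times\id)^*\Gamma=\sum_\sigma d_\sigma\Delta_\sigma+D$ sketched above, where $\sigma$ runs over $\ZZ_i$ and $\Delta_\sigma$ is the $\sigma$-translate of the diagonal of $\Sym^m C'\times\Sym^m C'$: set-theoretically $(j\times\id)^{-1}(\Gamma)$ consists of those $\bigl([x_1',\dots,x_m'],[y_1',\dots,y_m']\bigr)$ for which, after applying $j$ and discarding the appended copies of $p$, the multiset $\{f(x_k')\}$ either coincides with $\{f(y_k')\}$ or some $f(y_k')$ equals $p$; since $f$ is $i:1$ and $\ZZ_i$ acts on $C'$, the first alternative decomposes into the $\ZZ_i$-translates $\Delta_\sigma$, and the second is supported on $\Sym^m C'\times\Sym^{m-1}C'$. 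Assigning the correct cycle multiplicities $d_\sigma$ and collecting the residue into $D$ (supported on $\Sym^m C'\times\Sym^{m-1}C'$), set $d:=\sum_\sigma d_\sigma>0$. Restricting along the open immersion $\rho$ of $\Sym^m C'\setminus\Sym^{m-1}C'$ and using that $\rho^*$ annihilates classes supported on $\Sym^{m-1}C'$, one gets, for any $z\in\CH_*(\Sym^m C')^{\ZZ_i}$,
\[
\rho^*\Gamma_*j_*(z)=\sum_\sigma d_\sigma\,\rho^*(\sigma_*z)=d\,\rho^*(z),
\]
because $\sigma_*z=z$ for every $\sigma\in\ZZ_i$.

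Next I would chase the commutative square of localization sequences displayed above, by induction on $m$. If $j_*(z)=0$ with $z$ invariant, then $d\,\rho^*(z)=\rho^*\Gamma_*j_*(z)=0$, so exactness of the top row yields $dz=j'_*(z')$ for some $z'\in\CH_*(\Sym^{m-1}C')$. Since division by $i=|\ZZ_i|$ is unavailable integrally, rather than forcing $z'$ invariant I would use that $j'_*$ is $\ZZ_i$-equivariant (the boundary $\Sym^{m-1}C'$ being taken stable under the action), so that $i\,dz=\sum_\sigma\sigma_*(dz)=j'_*\bigl(\sum_\sigma\sigma_*z'\bigr)$ with $w:=\sum_\sigma\sigma_*z'$ now invariant. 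Commutativity gives $j''_*(w)=j_*\bigl(j'_*(w)\bigr)=j_*(i\,dz)=0$. By the inductive hypothesis for $m-1$ — the base case $m=0$ being immediate, since then $\Sym^m C'$ is a point and $j_*$ is injective — the kernel of $\CH_*(\Sym^{m-1}C')^{\ZZ_i}\to\CH_*(\Sym^n C)$ is torsion, so $ew=0$ for some integer $e>0$; pushing forward, $e\,i\,d\,z=e\,j'_*(w)=j'_*(ew)=0$, so $z$ is torsion and the induction closes.

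I expect the main obstacle to be the precise cycle-theoretic analysis behind the identity $(j\times\id)^*\Gamma=\sum_\sigma d_\sigma\Delta_\sigma+D$: one must control the scheme structure of the iterated fibre product $C^n\times_{C^m}C'^m$ and of its image under $\pi_n\times\pi_m'$, especially over the ramification locus of $f$ and over the special point $p$, in order to be sure that the residual component $D$ really is supported on $\Sym^m C'\times\Sym^{m-1}C'$ (and not on a thicker locus that would survive $\rho^*$) and that the total multiplicity $d=\sum_\sigma d_\sigma$ is strictly positive, which is exactly what makes the passage to torsion work. The equivariance of $j'_*$ invoked in the diagram chase is a secondary point that should still be made explicit (it requires, e.g., choosing the auxiliary point at a totally ramified point of $f$, or replacing the boundary by a $\ZZ_i$-stable divisor). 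Everything else is a formal transcription of the diagram chases of Proposition~\ref{prop1} and Proposition~\ref{prop2}.
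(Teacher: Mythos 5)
Your proposal follows the paper's own argument essentially verbatim: the same correspondence $\Gamma=(\pi_n\times\pi_m')(\Gamma')$, the same decomposition $(j\times\id)^*\Gamma=\sum_\sigma d_\sigma\Delta_\sigma+D$ with $D$ supported on $\Sym^m C'\times\Sym^{m-1}C'$, the same localization diagram chase, and the same induction on $m$ over the invariant part $\CH_*(\Sym^{m-1}C')^{\ZZ_i}$. The one genuine difference is your averaging step $w=\sum_\sigma\sigma_*z'$ to make the boundary class invariant before invoking the inductive hypothesis — a point the paper applies silently to a possibly non-invariant $z'$ — so your version is, if anything, the more careful transcription of the same proof.
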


\section{Closed subscheme inside symmetric powers of a curve}
Let $E$ be a non-singular closed  subscheme inside $\Sym^m C$ such that all its intersection with $\Sym^l X$ for $l\leq m$ are smooth, then we consider the embedding of $\Sym^m C$ into $\Sym^n C$. We want to prove that $j:E\to \Sym^n C$ gives rise to an injective push-forward homomorphism at the level of Chow groups. Consider the projection from $C^n$ to $C^m$. Consider the correspondence given by the fiber product of $C^n$ and $\pi^{-1}_m(E)$ over $C^m$. Call this  correspondence $\Gamma'$ . Then define $\Gamma$ to be $\pi_n\times \pi_m(\Gamma')$, that will give us a correspondence on $\Sym^n C\times E$. Then by \ref{lemma1} we get that the homomorphism $\Gamma_*j_*$ is induced by the cycle $(j\times id)^*(\Gamma)$. Now we compute this cycle.
So $(j\times id)^{-1}(\Gamma)$ is nothing but
$$\{([e_1,\cdots,e_m],[e_1',\cdots,e_m'])|
([e_1,\cdots,e_m,p\cdots,p],[e_1',\cdots,e_m'])\in \Gamma\}\;.$$
That would mean the following
$(e_1,\cdots,p,\cdots,e_m)$ and $(e_1,\cdots,e_m,p,\cdots,p)$ are in $\pi_n^{-1}(E)$ and
$(e_1',\cdots,e_m')$
is in the image of the projection. So we have
$$(e_1',\cdots,e_m')=(e_1,\cdots,e_m)$$
or
$$e_i'=p$$
for some $i$. That would mean that $(j\times id)^{-1}(\Gamma)=\Delta_E\cup Y$ where $Y$ is supported on $\Sym^{m-1}C\cap E$. Arguing as in \cite{Collino} we get that
$$(j\times id)^*(\Gamma)=d\Delta_E+D$$
where $D$ is supported on $\Sym^{m-1}C\cap E$. The chow moving lemma holds by the assumption that $E$ is non-singular and its intersections with all $\Sym^l X$ are non-singular for all $l\leq m$.  Then consider $\rho$ to be the open immersion of the complement of $\Sym^{m-1}C\cap E$ in $E$. Since $D$ is supported on $\Sym^{m-1}C\cap E$ we get that
$$\rho^*\Gamma_*j_*(Z)=\rho^*(dZ)\;.$$
As previous consider the diagram.
$$
  \xymatrix{
   \CH_*(\Sym^{m-1}C\cap E) \ar[r]^-{j'_{*}} \ar[dd]^-{}
  &   \CH_*(E) \ar[r]^-{\rho^{*}} \ar[dd]_-{j_{*}}
  & \CH_*(X_0(m))  \ar[dd]_-{}  \
  \\ \\
    \CH_*(\Sym^{m-1}C\cap E) \ar[r]^-{j''_*}
    & \CH_*(\Sym^{n} C) \ar[r]^-{}
  & \CH^*(U)
  }
$$

$X_0(m),U$ are complement of $\Sym^{m-1}C\cap E$ in $E,\Sym^n C$ respectively. Then suppose that we have
$$j_*(z)=0$$
that gives us that
$$\rho^*\Gamma_*j_*(z)=\rho^*(dz)=0$$
so there exists some $z'$ such that $j_*'(z')=dz$. But by the above diagram we have $j''_*(z')=0$. So by induction if we assume that $j''_*$ has torsion kernel then we get that $d'z'=0$, so we have $dd'z=0$. So the kernel of the map from $\CH_*(E)\to \CH_*(\Sym^n C)$ is torsion, consequently $\CH_*(E)\to \CH_*(\Sym^m C)$ has torsion kernel.

\begin{theorem}
Let $E$ be a non-singular closed subscheme in $\Sym^m C$ such that its intersections with all $\Sym^l X$ for $l\leq m$ are also non-singular. Then the closed embedding of $E$ into $\Sym^n C$ for $m\leq n$ induces a push-forward homomorphism at the level of Chow groups which has torsion kernel.
\end{theorem}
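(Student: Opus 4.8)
The plan is to run Collino's correspondence argument, in the same form used throughout the previous sections, for the closed embedding $j\colon E\to\Sym^n C$. First I would set up the correspondence: let $\pr_{n,m}\colon C^n\to C^m$ be the projection, let $\pi_i\colon C^i\to\Sym^i C$ be the quotient maps, form the fibre product of $C^n$ with $\pi_m^{-1}(E)$ over $C^m$ (via $\pr_{n,m}$ and the inclusion $\pi_m^{-1}(E)\hookrightarrow C^m$), call the resulting cycle $\Gamma'$ on $C^n\times\pi_m^{-1}(E)$, and push it forward by $\pi_n\times\pi_m$ to a correspondence $\Gamma$ on $\Sym^n C\times E$. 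Since the proof of Lemma \ref{lemma1} uses only the projection formula and the functoriality of the operations on correspondences, it applies verbatim here, so $\Gamma_*\circ j_*$ is the homomorphism induced by the cycle $(j\times\id)^*\Gamma$.

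The heart of the matter is the computation of $(j\times\id)^*\Gamma$. Set-theoretically, $(j\times\id)^{-1}(\Gamma)$ consists of those pairs $([e_1,\dots,e_m],[e_1',\dots,e_m'])$ in $E\times E$ for which $[e_1,\dots,e_m,p,\dots,p]$ and $[e_1',\dots,e_m']$ are related by $\pi_n\times\pi_m(\Gamma')$; unwinding the definition of $\Gamma'$, this forces either $[e_1',\dots,e_m']=[e_1,\dots,e_m]$ or $e_i'=p$ for some $i$, so $(j\times\id)^{-1}(\Gamma)=\Delta_E\cup Y$ with $Y$ supported on $\Sym^{m-1}C\cap E$. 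Exactly as in \cite{Collino}, one promotes this to the cycle-theoretic identity $(j\times\id)^*\Gamma=d\,\Delta_E+D$ for some positive integer $d$ and a cycle $D$ supported on $\Sym^{m-1}C\cap E$. This is the step where the non-singularity hypotheses are indispensable: both the refined Gysin pullback along $j\times\id$ and, more importantly, the Chow moving lemma on $E$ needed to slide the support of a zero-cycle off $\Sym^{m-1}C\cap E$, require $E$ and all the strata $E\cap\Sym^l C$ to be smooth. With $\rho$ the open immersion of $E_0:=E\setminus(\Sym^{m-1}C\cap E)$ into $E$, one then gets $\rho^*\Gamma_*j_*(Z)=\rho^*(dZ)$, since $\rho^*D=0$.

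With this in hand I would argue by induction on $m$. The inductive step runs through the commutative square whose two rows are the localization exact sequences of the pairs $(E,\Sym^{m-1}C\cap E)$ and $(\Sym^n C,\Sym^{m-1}C\cap E)$ and whose vertical arrows are the relevant push-forwards $j''_*$, $j_*$ and the restriction to the open parts. If $j_*(z)=0$ then $\rho^*(dz)=0$, so by exactness $dz=j'_*(z')$ for some class $z'$ on $\Sym^{m-1}C\cap E$, and commutativity then forces $j''_*(z')=0$. Now $\Sym^{m-1}C\cap E$ is again a non-singular closed subscheme of $\Sym^{m-1}C$ whose intersections with every $\Sym^l C$ are non-singular, so the hypotheses are preserved and the inductive hypothesis gives a positive integer $d'$ with $d'z'=0$; hence $dd'z=0$, i.e. $z$ is torsion. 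The base case $m=0$ is immediate, since then $E$ is a finite set of reduced points and push-forward of $0$-cycles into the projective variety $\Sym^n C$ is injective. This shows $\ker\bigl(\CH_*(E)\to\CH_*(\Sym^n C)\bigr)$ is torsion; since this map factors through $\CH_*(\Sym^m C)$, the map $\CH_*(E)\to\CH_*(\Sym^m C)$ has torsion kernel as well.

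The main obstacle is precisely the cycle-level identity $(j\times\id)^*\Gamma=d\,\Delta_E+D$: one must verify that the only component of the pullback not supported over $\Sym^{m-1}C\cap E$ is the diagonal, identify the multiplicity $d$ with which it occurs, and — crucially — confirm that the moving lemma is genuinely available on the (a priori possibly singular) scheme $E$, which is exactly where the smoothness of $E$ and of all the strata $E\cap\Sym^l C$ enters. The appearance of a multiplicity $d$ (rather than $d=1$) is also what forces the conclusion to be a torsion kernel rather than outright injectivity.
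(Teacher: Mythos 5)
Your proposal is correct and follows essentially the same route as the paper: the same correspondence $\Gamma=\pi_n\times\pi_m(\Gamma')$ built from the fibre product of $C^n$ with $\pi_m^{-1}(E)$ over $C^m$, the same identity $(j\times\id)^*\Gamma=d\,\Delta_E+D$ with $D$ supported on $\Sym^{m-1}C\cap E$, the moving lemma justified by the smoothness of $E$ and its strata, and the same localization-diagram induction yielding $dd'z=0$. Your explicit base case and the check that the hypotheses pass to $\Sym^{m-1}C\cap E$ are welcome elaborations, but they do not change the argument.
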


\end{document}